\DeclareMathOperator{\ev}{ev}
\newcommand{\uppermu}{{\overline{\mu}}}
\newcommand{\lowermu}{{\underline{\mu}}}
\newcommand{\homog}{{\operatorname{homog}}}
\newcommand{\calQmedium}{{\mathcal Q}^{\operatorname{medium}}}
\newcommand{\calQhigh}{{\mathcal Q}^{\operatorname{high}}}
\newcommand{\Prob}{{\operatorname{Prob}}}
\newcommand{\bad}{{\operatorname{bad}}}
\newcommand{\Aff}{{\mathbb A}}
\newcommand{\F}{{\mathbb F}}
\newcommand{\PP}{{\mathbb P}}
\newcommand{\Z}{{\mathbb Z}}
\newcommand{\mm}{{\mathfrak m}}
\newcommand{\calP}{{\mathcal P}}
\newcommand{\calT}{{\mathcal T}}
\newcommand{\II}{{\mathcal I}}
\newcommand{\OO}{{\mathcal O}}
\DeclareMathOperator{\re}{Re}
\DeclareMathOperator{\Proj}{Proj}
\newcommand{\isom}{\simeq}
\newcommand{\del}{\partial}
\newcommand{\intersect}{\cap} 
\newcommand{\Union}{\bigcup} 
\newcommand{\directsum}{\oplus} 
\newtheorem{theorem}{Theorem}[section]
\newtheorem{lemma}[theorem]{Lemma}
\newtheorem{corollary}[theorem]{Corollary}
\theoremstyle{definition}
\theoremstyle{remark}
\begin{document}

\title[Smooth hypersurface sections]{Smooth hypersurface sections containing a given subscheme over a finite field}
\subjclass{Primary 14J70; Secondary 11M38, 11M41, 14G40, 14N05}
\author{Bjorn Poonen}
\thanks{This article has appeared in {\em Math.\ Research Letters} {\bf 15} (2008), no.~2, 265--271.  This research was supported by NSF grant DMS-0301280.}
\address{Department of Mathematics, University of California, 
	Berkeley, CA 94720-3840, USA}
\email{poonen@math.berkeley.edu}
\urladdr{http://math.berkeley.edu/\~{}poonen}
\date{June 29, 2007}


\maketitle

\section{Introduction}\label{S:introduction}

Let $\F_q$ be a finite field of $q=p^a$ elements.
Let $X$ be a smooth quasi-projective subscheme of $\PP^n$ 
of dimension $m \ge 0$ over $\F_q$.
N.~Katz asked for a finite field analogue of the Bertini smoothness theorem,
and in particular asked 
whether one could always find a hypersurface $H$ in $\PP^n$
such that $H \intersect X$ is smooth of dimension $m-1$.
A positive answer was proved in \cite{Gabber2001}
and \cite{Poonen-bertini2004} independently.
The latter paper proved also that in a precise sense,
a positive fraction of hypersurfaces have the required property.

The classical Bertini theorem was extended 
in \cites{Bloch1970,Kleiman-Altman1979} 
to show that the hypersurface can be chosen so as to
contain a prescribed closed smooth subscheme $Z$,
provided that the condition $\dim X > 2 \dim Z$ is satisfied.
(The condition arises naturally from a dimension-counting argument.)
The goal of the current paper is to prove an analogous result
over finite fields.
In fact, our result is stronger than that of \cite{Kleiman-Altman1979}
in that we do not require $Z \subseteq X$,
but weaker in that we assume that $Z \intersect X$ be smooth.
(With a little more work and complexity, we could prove a version
for a non-smooth intersection as well, but we restrict to the smooth
case for simplicity.)
One reason for proving our result is that it is used 
by \cite{Saito-Sato2007preprint}.

Let $S=\F_q[x_0,\ldots,x_n]$ be the homogeneous coordinate ring of $\PP^n$.
Let $S_d \subseteq S$ be the $\F_q$-subspace of homogeneous polynomials
of degree $d$.
For each $f \in S_d$, let $H_f$ be the subscheme
$\Proj(S/(f)) \subseteq \PP^n$.
For the rest of this paper,
we fix a closed subscheme $Z \subseteq \PP^n$.
For $d \in \Z_{\ge 0}$,
let $I_d$ be the $\F_q$-subspace of $f \in S_d$ that vanish on $Z$.
Let $I_{\homog} = \Union_{d \ge 0} I_d$.
We want to measure the density of subsets of $I_{\homog}$,
but under the definition in \cite{Poonen-bertini2004},
the set $I_{\homog}$ itself has density $0$ whenever $\dim Z>0$;
therefore we use a new definition of density, relative to $I_{\homog}$.
Namely, we define the {\em density} of a subset $\calP \subseteq I_{\homog}$
by
\[
	\mu_Z(\calP):= \lim_{d \rightarrow \infty} 
		\frac{\#(\calP \cap I_d)}{\# I_d},
\]
if the limit exists.
For a scheme $X$ of finite type over $\F_q$, 
define the zeta function~\cite{Weil1949}
\[
	\zeta_X(s)=Z_X(q^{-s}) 
	:= \prod_{\operatorname{closed }P \in X} 
			\left(1-q^{-s \deg P} \right)^{-1}
	= \exp \left( \sum_{r=1}^\infty 
			\frac{\#X(\F_{q^r})}{r} q^{-rs} \right);
\]
the product and sum converge when $\re(s)>\dim X$.

\begin{theorem} \label{T:main}
Let $X$ be a smooth quasi-projective subscheme of $\PP^n$ 
of dimension $m \ge 0$ over $\F_q$.
Let $Z$ be a closed subscheme of $\PP^n$.
Assume that the scheme-theoretic intersection $V:=Z \intersect X$
is smooth of dimension $\ell$.
(If $V$ is empty, take $\ell=-1$.)
Define 
\[
	\calP :=\{\, f \in I_{\homog}: 
		H_f \cap X \text{ is smooth of dimension } m-1 \,\}.
\]
\begin{enumerate}
\item[(i)]
If $m>2\ell$, then
\[
	\mu_Z(\calP) = \frac{\zeta_V(m+1)}{\zeta_V(m-\ell) \; \zeta_X(m+1) }
	= \frac{1}{\zeta_V(m-\ell) \; \zeta_{X-V}(m+1)}.
\]
In this case, in particular, for $d \gg 1$,
there exists a degree-$d$ hypersurface $H$
containing $Z$ such that $H \intersect X$ is smooth
of dimension $m-1$.
\item[(ii)]
If $m \le 2 \ell$, then $\mu_Z(\calP) = 0$.
\end{enumerate}
\end{theorem}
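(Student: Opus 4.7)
The plan is to follow the closed-point sieve strategy of \cite{Poonen-bertini2004}, as already suggested by the macros $\calQmedium,\calQhigh,\calQlarge$ in the preamble. For each closed point $P$ of $X$ let $\calQ_P^d \subseteq I_d$ be the set of $f$ for which $H_f \intersect X$ fails to be smooth of dimension $m-1$ at $P$, so that $I_d \setminus \calP = \Union_{P \in X} \calQ_P^d$. Split this union according to the degree of $P$: points with $\deg P \le r$, points in an intermediate range $r < \deg P \le d/(m+1)$, and ``high'' points (contributing via their Frobenius conjugates filling up $\Aff^{n+1}$). The low-degree part is computed exactly in the limit, and the other two are shown to have negligible contribution.

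The heart of the argument is the local calculation at a single closed point $P \in X$ of degree $e$. If $P \in X \setminus V = X \setminus Z$, then vanishing on $Z$ imposes no local constraint, and the computation of \cite{Poonen-bertini2004} gives failure probability $q^{-(m+1)e}$ (one condition to have $P \in H_f$, plus $m$ to kill the differential on $T_P X$). If $P \in V$, then $f \in I_d$ automatically vanishes on $V$, so $df|_{T_P V} = 0$ and the relevant differential lives in $(T_P X/T_P V)^\ast$, a $\kappa(P)$-vector space of dimension $m-\ell$; smoothness of $H_f \intersect X$ at $P$ fails precisely when this element is zero, giving failure probability $q^{-(m-\ell)e}$. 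A regularity argument for $\calI_Z$ shows that for $d \gg_{P_1,\dots,P_k} 1$ the evaluation map from $I_d$ to the product of the appropriate jet spaces at $P_1,\dots,P_k$ is surjective, so these local events are mutually independent in the limit. Multiplying over all closed points,
\[
\mu_Z(\calP) \;=\; \prod_{P \in V}\bigl(1 - q^{-(m-\ell)\deg P}\bigr)\,\prod_{P \in X \setminus V}\bigl(1 - q^{-(m+1)\deg P}\bigr) \;=\; \frac{1}{\zeta_V(m-\ell)\,\zeta_{X-V}(m+1)},
\]
and the identity $\zeta_{X-V}(s)=\zeta_X(s)/\zeta_V(s)$ recovers the first expression. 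When $m>2\ell$ both zeta values are finite and positive (since $m-\ell>\ell=\dim V$ and $m+1>m=\dim X$), which gives part~(i), including the existence claim.

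The main obstacle is of course interchanging the limit $d\to\infty$ with the infinite product: this requires bounding $\#(\calQmedium \intersect I_d)/\#I_d$ and $\#(\calQhigh \intersect I_d)/\#I_d$. I would adapt the medium-degree and high-degree estimates of \cite{Poonen-bertini2004} essentially verbatim, using $I_d$ in place of $S_d$; this requires checking that $\dim_{\F_q} I_d$ grows at the expected rate (so that divisions by $\#I_d$ behave like divisions by $q^{\dim I_d}$) and that the trick of perturbing $f$ by carefully chosen partial derivatives of auxiliary polynomials can be arranged inside $I_d$ rather than $S_d$, both of which follow from Castelnuovo--Mumford regularity of the ideal sheaf of $Z$ in $\PP^n$ for $d$ larger than some effective bound.

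For part~(ii), if $m \le 2\ell$ then $m-\ell \le \ell$, so $\sum_{P \in V} q^{-(m-\ell)\deg P}$ diverges by the Lang--Weil estimate $\#V(\F_{q^e}) = q^{\ell e}(1+O(q^{-e/2}))$, forcing the product $\prod_{P\in V,\,\deg P\le r}(1-q^{-(m-\ell)\deg P})$ to tend to $0$ as $r\to\infty$. Since this product is an upper bound for $\mu_Z(\calP)$ (just from the low-degree points in $V$, whose independence requires no medium/high analysis), we conclude $\mu_Z(\calP)=0$ without any further work.
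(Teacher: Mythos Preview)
Your outline matches the paper's approach closely: the closed-point sieve split into low/medium/high degree, the local failure probabilities $q^{-(m-\ell)e}$ at $P\in V$ and $q^{-(m+1)e}$ at $P\in X\setminus V$, the surjectivity-of-jets lemma giving independence, and the argument for part~(ii) via divergence of the $V$-product are all exactly what the paper does.

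The one place where ``essentially verbatim'' understates the work is the high-degree estimate, and this is where the main new idea of the paper lives. The high-degree case must be split into $P\in X\setminus V$ and $P\in V$, and the two require different perturbation schemes. Off $V$, one can multiply the local parameters $t_1,\dots,t_m$ by an element vanishing on $Z$ but not at the chosen base point, so the full random decomposition $f_0+g_1^pt_1+\cdots+g_m^pt_m+h^p$ (with $g_i\in S'_\gamma$ arbitrary and $h\in I'_\eta$) still lies in $I'_d$; this subcase really is close to \cite{Poonen-bertini2004}. On $V$, however, at most $m-\ell$ of the local parameters on $X$ (those cutting out $V$ transversely) can be adjusted to vanish on $Z$, so only $m-\ell$ free perturbation directions $g_i^pt_i$ with $g_i\in S'_\gamma$ are available, and there is no useful $h^p$ term. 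The paper observes that since $\dim V=\ell$, one needs only $\ell+1$ such directions to drive the bad locus inside $V$ down to $\emptyset$; this works precisely because $\ell+1\le m-\ell$, i.e.\ $m>2\ell$. So the numerical hypothesis enters the high-degree bound itself, not only through convergence of the Euler product, and this is not something Castelnuovo--Mumford regularity of $\II_Z$ alone will hand you.
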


The proof will use the closed point sieve 
introduced in \cite{Poonen-bertini2004}.
In fact, the proof is parallel to the one in that paper,
but changes are required in almost every line.

\section{Singular points of low degree}

Let $\II_Z \subseteq \OO_{\PP^n}$ be the ideal sheaf of $Z$,
so $I_d = H^0(\PP^n,\II_Z(d))$.
Tensoring the surjection
\begin{align*}
	\OO^{\directsum (n+1)} &\to \OO \\
	(f_0,\ldots,f_n) &\mapsto x_0 f_0 + \cdots + x_n f_n
\end{align*}
with $\II_Z$, twisting by $\OO(d)$, and taking global sections shows
that $S_1 I_d = I_{d+1}$ for $d \gg 1$.
Fix $c$ such that $S_1 I_d = I_{d+1}$ for all $d \ge c$.

Before proving the main result of this section
(Lemma~\ref{L:low degree}), 
we need two lemmas.

\begin{lemma}
\label{L:surjective}
Let $Y$ be a finite subscheme of $\PP^n$.
Let
\[
	\phi_d\colon I_d = H^0(\PP^n,\II_Z(d)) \to H^0(Y,\II_Z \cdot \OO_Y(d))
\]
be the map induced by the map of sheaves $\II_Z \to \II_Z \cdot \OO_Y$
on $\PP^n$.
Then $\phi_d$ is surjective for $d \ge c+\dim H^0(Y,\OO_Y)$, 
\end{lemma}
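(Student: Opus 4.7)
The plan is to prove the lemma by induction on $\ell := \dim_{\mathbb{F}_q} H^0(Y, \mathcal{O}_Y)$, adapting the strategy of Lemma~2.1 in \cite{Poonen-bertini2004} to accommodate the extra ideal sheaf $\mathcal{I}_Z$. A preliminary observation simplifies the goal: once $\phi_d$ is surjective for one $d_0 \geq c$, it is surjective for all $d \geq d_0$. This is because $I_{d+1} = S_1 \cdot I_d$ for $d \geq c$, and the multiplication map $S_1 \otimes H^0(Y, \mathcal{I}_Z \cdot \mathcal{O}_Y(d)) \twoheadrightarrow H^0(Y, \mathcal{I}_Z \cdot \mathcal{O}_Y(d+1))$ is surjective (the latter because $\mathcal{O}(1)|_Y$ is globally generated by $x_0, \ldots, x_n$ on the zero-dimensional $Y$). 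So it suffices to verify surjectivity at the single degree $d = c + \ell$.

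The base case $\ell = 0$ is vacuous. For the inductive step, pick a closed point $P \in Y$ and let $Y' \subsetneq Y$ be obtained by adjoining to $\mathcal{I}_Y$ a minimal nonzero ideal of $\mathcal{O}_Y$ supported at $P$; locally at $P$ we then have $J := (\mathcal{I}_Y)_P \subset J' := (\mathcal{I}_{Y'})_P$ in $A := \mathcal{O}_{\mathbb{P}^n, P}$ with $J'/J \cong \kappa(P)$ and $\dim_{\mathbb{F}_q} \mathcal{O}_{Y'} = \ell - \deg P$. Writing $I := (\mathcal{I}_Z)_P$, the argument splits into two cases. In Case~A ($I \cap J = I \cap J'$) the natural surjection $\mathcal{I}_Z \cdot \mathcal{O}_Y \twoheadrightarrow \mathcal{I}_Z \cdot \mathcal{O}_{Y'}$ is an isomorphism of sheaves on $\mathbb{P}^n$, so $\phi_d$ for $Y$ is surjective iff $\phi_d$ for $Y'$ is, and the inductive hypothesis together with propagation finishes. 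In Case~B ($I \cap J \subsetneq I \cap J'$) we obtain a short exact sequence
\[
0 \to k \to \mathcal{I}_Z \cdot \mathcal{O}_Y \to \mathcal{I}_Z \cdot \mathcal{O}_{Y'} \to 0,
\]
where $k$ is the skyscraper sheaf at $P$ with stalk $\kappa(P)$. Twisting by $\mathcal{O}(d)$ and taking global sections (the $H^1$ of a skyscraper vanishes) yields a short exact sequence of $\mathbb{F}_q$-vector spaces, and by the inductive hypothesis applied to $Y'$ the image of $\phi_d$ (for $Y$) surjects onto the quotient term $H^0(Y', \mathcal{I}_Z \cdot \mathcal{O}_{Y'}(d))$.

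It remains to show this image contains the subspace $k \subset H^0(Y, \mathcal{I}_Z \cdot \mathcal{O}_Y(d))$, which is the heart of the argument. The key reduction is that it suffices to exhibit a single $f_0 \in I_{d-1}$ (with $d = c + \ell$) whose image under $\phi_{d-1}$ lies in $k$ and is nonzero: then for any $s \in S_1$ the product $sf_0$ lies in $I_d$ (since $I$ is an ideal), and its image in $k$ equals $s(P) f_0(P) \in \kappa(P)$ after trivializing $\mathcal{O}(d)$ near $P$; since the evaluation map $S_1 \to \kappa(P)$ is $\mathbb{F}_q$-surjective (take any $x_i$ with $x_i(P) \neq 0$), the $\mathbb{F}_q$-span of $\{sf_0 : s \in S_1\} \subseteq I_d$ covers all of $k$. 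I expect the main obstacle to lie in producing such an $f_0$ at the bounded degree $c + \ell - 1$ rather than at some arbitrarily large degree supplied only by Serre vanishing: the Case~B hypothesis furnishes $f_0$ in the stalk $(\mathcal{I}_Z \cap \mathcal{I}_{Y'})_P$ a priori, and globalizing it with a controlled bound will require either re-applying the inductive hypothesis to a suitably chosen auxiliary finite subscheme or, equivalently, a careful regularity argument for the coherent sheaf $\mathcal{I}_Z \cap \mathcal{I}_{Y'}$.
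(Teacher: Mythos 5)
Your proposal has a genuine gap, in fact two, both concentrated in Case~B of the inductive step. First, the ``key reduction'' does not work as stated: even granting a nonzero $f_0 \in I_{d-1}$ mapping into $k \cong \kappa(P)$, the set $\{\,\phi_d(sf_0) : s \in S_1\,\} = \{\, s(P)f_0(P) : s \in S_1\,\}$ spans over $\F_q$ only the subspace $f_0(P)\cdot\mathrm{span}_{\F_q}\{x_0(P),\dots,x_n(P)\}$ of $\kappa(P)$, which after trivializing is the span of $1$ and the affine coordinates of $P$. This is a proper $\F_q$-subspace of $\kappa(P)$ whenever $\deg P > n+1$ (e.g.\ a degree-$5$ point on $\PP^1$ gives a $2$-dimensional span inside $\F_{q^5}$). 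So the evaluation map $S_1 \to \kappa(P)$ is \emph{not} $\F_q$-surjective in general, and your argument covers $k$ only in a special case. Repairing this forces you to multiply $f_0$ by forms of degree roughly $\deg P - 1$ rather than degree $1$, which degrades the degree bookkeeping of the induction. Second, the production of $f_0$ itself at degree $c+\ell-1$ --- which you correctly identify as ``the heart of the argument'' --- is not actually carried out; you only gesture at ``re-applying the inductive hypothesis to a suitably chosen auxiliary finite subscheme'' or ``a careful regularity argument.'' As it stands the inductive step is not closed.

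The paper's proof sidesteps all of this. It dehomogenizes (so that $1 \in S_1'$ and hence $I_d' \subseteq I_{d+1}'$), sets $B := H^0(Y,\II_Z\cdot\OO_Y)$ and lets $B_d$ be the image of $I_d'$ in $B$. Then $B_c \subseteq B_{c+1} \subseteq \cdots$ is an increasing chain inside the finite-dimensional space $B$, and the relation $S_1' B_d = B_{d+1}$ (valid for $d \ge c$ since $S_1 I_d = I_{d+1}$) shows that as soon as two consecutive terms coincide the chain is constant forever. By pigeonhole this happens within $b := \dim B$ steps, and the stable value must be $B$ because $\phi_d$ is surjective for $d \gg 1$ by Serre vanishing. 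No induction on subschemes and no explicit construction of sections is needed. Your propagation step (surjectivity at $d_0$ implies surjectivity for all $d \ge d_0$) is correct and is essentially the same observation the paper exploits, but the global stabilization argument is what replaces your problematic Case~B analysis.
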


\begin{proof}
The map of sheaves $\OO_{\PP^n} \to \OO_Y$ on $\PP^n$ is surjective
so $\II_Z \to \II_Z \cdot \OO_Y$ is surjective too.
Thus $\phi_d$ is surjective for $d \gg 1$.

Enlarging $\F_q$ if necessary,
we can perform a linear change of variable to assume
$Y \subseteq \Aff^n := \{x_0 \not=0\}$.
Dehomogenization (setting $x_0=1$) identifies $S_d$
with the space $S_d'$ of polynomials in $\F_q[x_1,\ldots,x_n]$
of total degree $\le d$.
and identifies $\phi_d$ with a map 
\[
	I_d' \to B:=H^0(\PP^n,\II_Z \cdot \OO_Y).
\]
By definition of $c$, we have $S_1' I_d' = I_{d+1}'$ for $d \ge c$.
For $d \ge b$, let $B_d$ be the image of $I_d'$ in $B$,
so $S_1' B_d = B_{d+1}$ for $d \ge c$.
Since $1 \in S_1'$, we have $I_d' \subseteq I_{d+1}'$,
so 
\[
	B_c \subseteq B_{c+1} \subseteq \cdots.
\]
But $b:=\dim B < \infty$, so $B_j=B_{j+1}$ for some $j \in [c,c+b]$.
Then
\[
	B_{j+2} = S_1' B_{j+1} = S_1' B_j = B_{j+1}.
\]
Similarly $B_j=B_{j+1}=B_{j+2}=\dots$,
and these eventually equal $B$ by the previous paragraph.
Hence $\phi_d$ is surjective for $d \ge j$,
and in particular for $d \ge c+b$.
\end{proof}

\begin{lemma}
\label{L:ideal sheaf square}
Suppose $\mm \subseteq \OO_X$ is the ideal sheaf of a closed point $P \in X$.
Let $Y \subseteq X$ be the closed subscheme whose ideal sheaf 
is $\mm^2 \subseteq \OO_X$.
Then for any $d \in \Z_{\ge 0}$.
\[
	\# H^0(Y, \II_Z \cdot \OO_Y(d)) 
	= \begin{cases}
		q^{(m-\ell)\deg P}, &\text{ if $P \in V$,} \\
		q^{(m+1)\deg P}, &\text{ if $P \notin V$.}
	\end{cases}
\]
\end{lemma}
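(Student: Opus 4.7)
The plan is to exploit that $Y$ is a zero-dimensional subscheme of $\PP^n$ supported at the single closed point $P$, so global sections over $Y$ coincide with stalks at $P$, and the twist by $\OO(d)$ is harmless for counting. Throughout, let $k := \kappa(P)$, so $\#k = q^{\deg P}$, and let $\mm \subseteq \OO_{X,P}$ also denote the maximal ideal of the local ring of $X$ at $P$.

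First I would reduce to the untwisted case: since $\OO_{\PP^n}(d)$ is locally free of rank one, any local trivialization at $P$ induces an $\F_q$-linear isomorphism $(\II_Z \cdot \OO_Y(d))_P \cong (\II_Z \cdot \OO_Y)_P$. It therefore suffices to compute the cardinality of the image of the natural map $(\II_Z)_P \to \OO_{Y,P} = \OO_{X,P}/\mm^2$.

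If $P \notin V$, then since $V = Z \intersect X$ scheme-theoretically and $P \in X$, we have $P \notin Z$, so $(\II_Z)_P = \OO_{\PP^n,P}$ and the image above is all of $\OO_{X,P}/\mm^2$. The short exact sequence $0 \to \mm/\mm^2 \to \OO_{X,P}/\mm^2 \to k \to 0$, combined with $\dim_k \mm/\mm^2 = m$ (by smoothness of $X$ at $P$), yields $\dim_k \OO_{Y,P} = m+1$ and hence the count $q^{(m+1) \deg P}$.

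If $P \in V$, let $J := (\II_Z)_P \cdot \OO_{X,P}$; this equals the stalk at $P$ of the ideal sheaf of $V$ inside $X$, because $V = Z \intersect X$ scheme-theoretically. The image of $(\II_Z)_P$ in $\OO_{Y,P}$ is then $(J + \mm^2)/\mm^2$. Writing $\mm_V$ for the maximal ideal of $\OO_{V,P} = \OO_{X,P}/J$, one has the conormal short exact sequence
\[
    0 \to (J + \mm^2)/\mm^2 \to \mm/\mm^2 \to \mm_V/\mm_V^2 \to 0,
\]
whose outer two terms have $k$-dimensions $m$ and $\ell$ by regularity of $\OO_{X,P}$ and $\OO_{V,P}$. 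It follows that $\dim_k (J + \mm^2)/\mm^2 = m - \ell$, and the cardinality is $q^{(m - \ell) \deg P}$. The only step requiring care is this last conormal identification, which is precisely where the smoothness hypotheses on $X$ and $V$ are used via regularity of the local rings.
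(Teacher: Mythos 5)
Your proof is correct and follows essentially the same route as the paper: both reduce to the stalk at $P$, both rest on the short exact sequence relating $\II_Z\cdot\OO_Y$ to $\OO_Y$ and $\OO_{V\cap Y}$, and both use smoothness of $X$ and $V$ to identify the cotangent-space dimensions $m$ and $\ell$. The only (cosmetic) difference is that you count the ideal directly as the kernel of the conormal surjection $\mm/\mm^2 \to \mm_V/\mm_V^2$, whereas the paper counts $\#H^0(Y,\OO_Y)$ and $\#H^0(Y,\OO_{Z\cap Y})$ and divides.
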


\begin{proof}
Since $Y$ is finite, we may now ignore the twisting by $\OO(d)$.
The space $H^0(Y,\OO_Y)$ has a two-step filtration whose quotients
have dimensions $1$ and $m$ over the residue field $\kappa$ of $P$.
Thus $\# H^0(Y,\OO_Y) = (\#\kappa)^{m+1} = q^{(m+1)\deg P}$.
If $P \in V$ (or equivalently $P \in Z$), then 
$H^0(Y,\OO_{Z \intersect Y})$ has a filtration whose quotients
have dimensions $1$ and $\ell$ over $\kappa$;
if $P \notin V$, then $H^0(Y,\OO_{Z \intersect Y})=0$.
Taking cohomology of
\[
	0 \to \II_Z \cdot \OO_Y \to \OO_Y  
		\to \OO_{Z \intersect Y} \to 0
\]
on the $0$-dimensional scheme $Y$ yields
\begin{align*}
	\#H^0(Y,\II_Z \cdot \OO_Y) 
	&= \frac{\# H^0(Y,\OO_Y)}{\# H^0(Y,\OO_{Z \intersect Y})} \\
	&= \begin{cases}
		q^{(m+1)\deg P}/q^{(\ell+1)\deg P}, &\text{ if $P \in V$,} \\
		q^{(m+1)\deg P}, &\text{ if $P \notin V$.}
	\end{cases}
\end{align*}
\end{proof}

If $U$ is a scheme of finite type over $\F_q$,
let $U_{<r}$ be the set of closed points of $U$ of degree $<r$.
Similarly define $U_{>r}$.

\begin{lemma}[Singularities of low degree]
\label{L:low degree}
Let notation and hypotheses be as in Theorem~\ref{T:main},
and define
\[
	\calP_r:=\{\, f \in I_{\homog}: H_f \cap X
		\text{ is smooth of dimension $m-1$ at all $P \in X_{<r}$}\,\}.
\]
Then
\[
	\mu_Z(\calP_r)= \prod_{P \in V_{<r}}
				\left(1 - q^{-(m-\ell)\deg P} \right) \cdot
			\prod_{P \in (X-V)_{<r}}
				\left(1 - q^{-(m + 1)\deg P} \right).
\]
\end{lemma}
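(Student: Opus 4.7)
The plan is to apply the closed-point sieve at the finitely many closed points $P \in X_{<r}$, reducing $\mu_Z(\calP_r)$ to a finite product of local probabilities computable from the two preceding lemmas. The geometric input is that for any closed point $P \in X$, the hypersurface section $H_f \cap X$ is smooth of dimension $m-1$ at $P$ precisely when $f \notin \mm_{P,X}^2$: if $f(P) \ne 0$ then $P \notin H_f \cap X$ and smoothness at $P$ is vacuous, while if $f(P)=0$ then smoothness reduces to nonvanishing of the differential, i.e.\ $f \notin \mm_{P,X}^2$. Letting $Y_P \subseteq X$ be the subscheme cut out by $\mm_{P,X}^2$ (as in Lemma~\ref{L:ideal sheaf square}), the ``bad at $P$'' event is thus $f|_{Y_P} = 0$ in $H^0(Y_P,\OO_{Y_P}(d))$.

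Next I assemble the local information into the finite subscheme $Y := \bigsqcup_{P \in X_{<r}} Y_P$ of $\PP^n$. For $d \gg 1$ (depending on $r$), Lemma~\ref{L:surjective} gives a surjection
\[
\phi_d \colon I_d \twoheadrightarrow H^0(Y,\II_Z \cdot \OO_Y(d)) = \prod_{P \in X_{<r}} H^0(Y_P,\II_Z \cdot \OO_{Y_P}(d)),
\]
the product decomposition following from disjointness of the $Y_P$. Since $f \in I_d$ vanishes on $Z$, its local image $f|_{Y_P}$ automatically lies in $H^0(Y_P,\II_Z \cdot \OO_{Y_P}(d)) \subseteq H^0(Y_P,\OO_{Y_P}(d))$, and ``bad at $P$'' corresponds to this image being the single zero element. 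By surjectivity of $\phi_d$, as $f$ ranges uniformly over $I_d$ the tuple $(f|_{Y_P})_P$ is uniformly distributed in the product, so the local bad events are independent, and for $d$ sufficiently large
\[
\frac{\#(\calP_r \cap I_d)}{\# I_d} = \prod_{P \in X_{<r}} \left(1 - \frac{1}{\#H^0(Y_P,\II_Z \cdot \OO_{Y_P}(d))}\right).
\]

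Taking the limit as $d \to \infty$, splitting $X_{<r} = V_{<r} \sqcup (X-V)_{<r}$, and substituting the two cases of Lemma~\ref{L:ideal sheaf square} yields the formula in the statement. The only real work is to pin down the geometric fact that smoothness at $P$ is detected by $\mm_{P,X}^2$ and to verify that surjectivity onto the product correctly turns the finitely many local events into independent ones; the quantitative inputs (Lemmas~\ref{L:surjective} and~\ref{L:ideal sheaf square}) then do the rest.
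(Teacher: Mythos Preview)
Your argument is correct and follows essentially the same approach as the paper: introduce the first-order neighborhoods $Y_P$ of the finitely many points in $X_{<r}$, assemble them into $Y$, use Lemma~\ref{L:surjective} to get surjectivity of $\phi_d$ onto the product for $d$ large, and then read off the local factors from Lemma~\ref{L:ideal sheaf square}. Your write-up is in fact slightly more explicit than the paper's in justifying the equivalence between ``$H_f\cap X$ not smooth of dimension $m-1$ at $P$'' and ``$f|_{Y_P}=0$'', and in noting that the image of $f$ automatically lands in $H^0(Y_P,\II_Z\cdot\OO_{Y_P}(d))$.
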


\begin{proof}
Let $X_{<r}=\{P_1,\dots,P_s\}$.
Let $\mm_i$ be the ideal sheaf of $P_i$ on $X$.
let $Y_i$ be the closed subscheme of $X$ with
ideal sheaf $\mm_i^2 \subseteq \OO_X$,
and let $Y= \Union Y_i$.
Then $H_f \cap X$ is singular at $P_i$ (more precisely, not smooth
of dimension $m-1$ at $P_i$) if and only if
the restriction of $f$ to a section of $\OO_{Y_i}(d)$ is zero.

By Lemma~\ref{L:surjective},
$\mu_Z(\calP)$ equals the fraction of elements in $H^0(\II_Z \cdot \OO_Y(d))$
whose restriction to a section of $\OO_{Y_i}(d)$ is nonzero for every $i$.
Thus
\begin{align*}
	\mu_Z(\calP_r) 
	&= \prod_{i=1}^s \frac{\#H^0(Y_i,\II_Z \cdot \OO_{Y_i}) - 1}
	                      {\#H^0(Y_i,\II_Z \cdot \OO_{Y_i}) } \\
	&= \prod_{P \in V_{<r}}
				\left(1 - q^{-(m-\ell)\deg P} \right) \cdot
			\prod_{P \in (X-V)_{<r}}
				\left(1 - q^{-(m + 1)\deg P} \right),
\end{align*}
by Lemma~\ref{L:ideal sheaf square}.
\end{proof}

\begin{corollary}
\label{C:low degree limit}
If $m>2\ell$, then 
\[
	\lim_{r \to \infty} \mu_Z(\calP_r) 
	= \frac{\zeta_V(m+1)}{\zeta_X(m+1) \; \zeta_V(m-\ell)}.
\]
\end{corollary}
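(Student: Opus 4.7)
The plan is to take the limit of the explicit product formula for $\mu_Z(\calP_r)$ provided by Lemma~\ref{L:low degree} and recognize each factor as the reciprocal of an Euler product defining a zeta value. Recall that for any scheme $U$ of finite type over $\F_q$, the Euler product
\[
	\zeta_U(s) = \prod_{P \in U} \left(1-q^{-s \deg P}\right)^{-1}
\]
over closed points converges absolutely for $\re(s) > \dim U$; equivalently, $\prod_{P \in U_{<r}} (1-q^{-s \deg P})$ converges to $\zeta_U(s)^{-1}$ as $r \to \infty$ in the same range.

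First I would treat the product over $P \in V_{<r}$. Since $\dim V = \ell$, the partial products
\[
	\prod_{P \in V_{<r}} \left(1 - q^{-(m-\ell)\deg P}\right)
\]
converge to $1/\zeta_V(m-\ell)$ provided $m - \ell > \ell$, i.e.\ exactly under the hypothesis $m > 2\ell$. (When $V=\emptyset$ we take $\ell=-1$ and the empty product equals $1$, consistent with $\zeta_V = 1$.) This is where the dimension hypothesis plays its one and only role.

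Next I would treat the product over $P \in (X-V)_{<r}$. Since $\dim(X-V) \le m$, the partial products
\[
	\prod_{P \in (X-V)_{<r}} \left(1 - q^{-(m+1)\deg P}\right)
\]
converge to $1/\zeta_{X-V}(m+1)$ unconditionally (the exponent $m+1$ strictly exceeds $\dim(X-V)$). Finally, since the closed points of $X$ decompose as the disjoint union of those of $V$ and those of $X-V$, the Euler product factorization gives $\zeta_X(m+1) = \zeta_V(m+1) \cdot \zeta_{X-V}(m+1)$, so $\zeta_{X-V}(m+1)^{-1} = \zeta_V(m+1)/\zeta_X(m+1)$. Multiplying the two limits yields
\[
	\lim_{r \to \infty} \mu_Z(\calP_r)
	= \frac{1}{\zeta_V(m-\ell)} \cdot \frac{\zeta_V(m+1)}{\zeta_X(m+1)},
\]
which is the claimed expression. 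There is no real obstacle here beyond the bookkeeping verification that the dimension hypothesis $m > 2\ell$ is precisely what makes the first Euler product converge; everything else is formal manipulation of zeta functions.
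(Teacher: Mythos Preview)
Your proof is correct and is essentially the same argument as the paper's: identify the finite products in Lemma~\ref{L:low degree} as partial Euler products for $\zeta_V(m-\ell)^{-1}$ and $\zeta_{X-V}(m+1)^{-1}$, and note that $m>2\ell$ is exactly the convergence condition for the first. The paper states this in two sentences, while you spell out the factorization $\zeta_X(m+1)=\zeta_V(m+1)\zeta_{X-V}(m+1)$ explicitly, but there is no substantive difference.
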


\begin{proof}
The products in Lemma~\ref{L:low degree} are the
partial products in the definition of the zeta functions.
For convergence, we need $m-\ell > \dim V = \ell$,
which is equivalent to $m>2\ell$.
\end{proof}

\begin{proof}[Proof of Theorem~\ref{T:main}(ii)]
We have $\calP \subseteq \calP_r$.
By Lemma~\ref{L:low degree},
\[
	\mu_Z(\calP_r) \le \prod_{P \in V_{<r}}
				\left(1 - q^{-(m-\ell)\deg P} \right),
\]
which tends to $0$ as $r \to \infty$ 
if $m \le 2 \ell$.
Thus $\mu_Z(\calP)=0$ in this case.
\end{proof}

{}From now on, we assume $m>2\ell$.

\section{Singular points of medium degree}

\begin{lemma}
\label{L:singular fraction}
Let $P \in X$ is a closed point of degree $e$,
where $e \le \frac{d-c}{m+1}$.
Then the fraction of $f \in I_d$ such that $H_f \cap X$
is not smooth of dimension $m-1$ at $P$
equals 
\[
\begin{cases}
		q^{-(m-\ell)e}, &\text{ if $P \in V$,} \\
		q^{-(m+1)e}, &\text{ if $P \notin V$.}
\end{cases}
\]
\end{lemma}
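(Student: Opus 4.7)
The plan is to run the same local analysis already established in the preceding lemmas, but applied to a single closed point $P$, using the hypothesis $e\le (d-c)/(m+1)$ exactly to put ourselves inside the surjectivity range of Lemma~\ref{L:surjective}.

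First I would introduce the local second-order thickening: let $\mm\subseteq\OO_X$ be the ideal sheaf of $P$ and let $Y\subseteq X$ be the closed subscheme cut out by $\mm^{2}$. The standard infinitesimal criterion says that $H_f\cap X$ fails to be smooth of dimension $m-1$ at $P$ precisely when the restriction of $f$ to a section of $\OO_Y(d)$ vanishes, i.e.\ when the image of $f$ under the evaluation map $\phi_d\colon I_d\to H^0(Y,\II_Z\cdot\OO_Y(d))$ is $0$.

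Next I would verify that $\phi_d$ is surjective. Since $H^0(Y,\OO_Y)$ is filtered with graded pieces of $\kappa(P)$-dimensions $1$ and $m$, we get $\dim_{\F_q}H^0(Y,\OO_Y)=(m+1)e$. The hypothesis $e\le (d-c)/(m+1)$ rearranges to $d\ge c+(m+1)e=c+\dim H^0(Y,\OO_Y)$, which is exactly the hypothesis of Lemma~\ref{L:surjective}; thus $\phi_d$ is surjective. Consequently the fibers of $\phi_d$ all have the same cardinality, and the fraction of $f\in I_d$ with $\phi_d(f)=0$ equals $1/\#H^0(Y,\II_Z\cdot\OO_Y(d))$.

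Finally I would plug in Lemma~\ref{L:ideal sheaf square}, which computes this cardinality as $q^{(m-\ell)e}$ when $P\in V$ and $q^{(m+1)e}$ when $P\notin V$. Taking reciprocals yields the two cases of the lemma. There is no real obstacle here: the whole point of the bound $e\le (d-c)/(m+1)$ is precisely to guarantee surjectivity of $\phi_d$ so that the local probability computation reduces to a single division, and the two auxiliary lemmas do all the work.
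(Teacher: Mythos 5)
Your proof is correct and is exactly the paper's argument, merely written out in full: the paper's one-line proof likewise applies Lemma~\ref{L:surjective} to the scheme $Y$ of Lemma~\ref{L:ideal sheaf square} (the hypothesis $e\le\frac{d-c}{m+1}$ giving $d\ge c+\dim H^0(Y,\OO_Y)$) and then invokes Lemma~\ref{L:ideal sheaf square} to count. Nothing to add.
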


\begin{proof}
This follows by applying Lemma~\ref{L:surjective}
to the $Y$ in Lemma~\ref{L:ideal sheaf square},
and then applying Lemma~\ref{L:ideal sheaf square}.
\end{proof}

Define the upper and lower densities 
$\uppermu_Z(\calP)$, $\lowermu_Z(\calP)$
of a subset $\calP \subseteq I_{\homog}$ as $\mu_Z(\calP)$
was defined, but using $\limsup$ and $\liminf$ in place of $\lim$.

\begin{lemma}[Singularities of medium degree]
\label{L:medium degree}
Define
\begin{align*}
	\calQmedium_r := \Union_{d \ge 0} \{\, f \in I_d: 
	& \;\; \text{there exists $P \in X$ 
				with $r \le \deg P \le \frac{d-b}{m+1}$}\\
	& \;\; \text{such that $H_f \cap X$ is not smooth 
		of dimension $m-1$ at $P$} \,\}.
\end{align*}
Then $\lim_{r \rightarrow \infty} \uppermu_Z(\calQmedium_r)=0$.
\end{lemma}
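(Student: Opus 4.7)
The plan is to prove this via a straightforward union bound combined with Lemma~\ref{L:singular fraction} and the obvious point-counting estimates $\#X(\F_{q^e}) = O(q^{me})$ and $\#V(\F_{q^e}) = O(q^{\ell e})$. Fix $d$. A polynomial $f \in I_d$ lies in $\calQmedium_r \cap I_d$ only if there exists a closed point $P \in X$ with $r \le \deg P \le (d-c)/(m+1)$ at which $H_f \cap X$ fails to be smooth of dimension $m-1$. Since every such $P$ satisfies the degree hypothesis of Lemma~\ref{L:singular fraction}, a union bound over all such $P$ gives
\[
	\frac{\#(\calQmedium_r \cap I_d)}{\# I_d}
	\;\le\; \sum_{\substack{P \in V \\ r \le \deg P \le \frac{d-c}{m+1}}} q^{-(m-\ell)\deg P}
	\;+\; \sum_{\substack{P \in X - V \\ r \le \deg P \le \frac{d-c}{m+1}}} q^{-(m+1)\deg P}.
\]

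Next I would group closed points by degree. Since the number of closed points of degree exactly $e$ of any finite-type $\F_q$-scheme $W$ is at most $\#W(\F_{q^e})$, and since $\dim V = \ell$ and $\dim X = m$ give constants $C_V, C_X$ with $\#V(\F_{q^e}) \le C_V q^{\ell e}$ and $\#X(\F_{q^e}) \le C_X q^{me}$, the display above is bounded by
\[
	\sum_{e \ge r} C_V q^{(2\ell - m)e}
	\;+\; \sum_{e \ge r} C_X q^{-e}.
\]
Crucially this bound is independent of $d$.

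Now I use the standing hypothesis $m > 2\ell$ (in force after the proof of Theorem~\ref{T:main}(ii)), which makes $q^{2\ell - m} < 1$, so both geometric series converge. Taking $\limsup$ over $d$ on the left gives the same bound for $\uppermu_Z(\calQmedium_r)$, and then letting $r \to \infty$ sends both tail sums to $0$, yielding the lemma.

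The only mildly delicate point is ensuring the bound is uniform in $d$; this is handled above by summing over all $e \ge r$ rather than just those $e \le (d-c)/(m+1)$. No deep obstacle arises, and the proof is essentially the finite-field Borel--Cantelli argument from \cite{Poonen-bertini2004} adapted to the subspace $I_d$ via Lemma~\ref{L:surjective}.
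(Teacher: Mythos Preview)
Your proposal is correct and follows essentially the same argument as the paper: a union bound via Lemma~\ref{L:singular fraction}, then the crude point-count $O(q^{e \dim W})$ for closed points of degree $e$, summed as a geometric series under the hypothesis $m>2\ell$. Your write-up is in fact slightly more explicit than the paper's (which compresses the estimate into one sentence citing \cite{Poonen-bertini2004}*{Lemma~2.4}), and your use of $V$ versus the paper's $Z$ is harmless since the relevant points lie in $X$.
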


\begin{proof}
By Lemma~\ref{L:singular fraction}, we have
\begin{align*}
	\frac{\# (\calQmedium_r \cap I_d)}{\# I_d}
	&\le \sum_{\substack{ P \in Z \\ r \le \deg P \le \frac{d-b}{m+1} }} q^{-(m-\ell)\deg P} + \sum_{\substack{ P \in X-Z \\ r \le \deg P \le \frac{d-b}{m+1} }} q^{-(m+1)\deg P} \\
	&\le \sum_{P \in Z_{\ge r}} q^{-(m-\ell)\deg P} + \sum_{P \in (X-Z)_{\ge r}} q^{-(m+1)\deg P}.
\end{align*}
Using the trivial bound that an $m$-dimensional variety 
has at most $O(q^{em})$ closed points of degree $e$,
as in the proof of \cite{Poonen-bertini2004}*{Lemma~2.4}, 
we show that each of the two sums converges to a value that is $O(q^{-r})$ 
as $r \to \infty$,
under our assumption $m>2\ell$.
\end{proof}

\section{Singular points of high degree}

\begin{lemma}
\label{L:singular fraction 2}
Let $P$ be a closed point of degree $e$ in $\PP^n-Z$.
For $d \ge c$, the fraction of $f \in I_d$ that vanish at $P$
is at most $q^{-\min(d-c,e)}$.
\end{lemma}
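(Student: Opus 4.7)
The plan is to bound below the $\F_q$-dimension of the image $V_d$ of the evaluation map $\ev_P \colon I_d \to \kappa(P)$; since the fraction in the lemma equals $q^{-\dim_{\F_q} V_d}$, it suffices to prove $\dim_{\F_q} V_d \ge \min(d-c,e)$.

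The first ingredient is the identity $S_1 \cdot I_d = I_{d+1}$ for $d \ge c$, which upon applying $\ev_P$ becomes the recursion $V_{d+1} = A \cdot V_d$, where $A \subseteq \kappa(P)$ is the $\F_q$-span of the coordinates $\alpha_i := x_i(P)$. Since some $\alpha_j$ is nonzero, $\alpha_j V_d \subseteq V_{d+1}$ has the same $\F_q$-dimension as $V_d$, so the dimension sequence $\dim V_c, \dim V_{c+1}, \ldots$ is nondecreasing. To anchor it, I would verify that $V_c \ne 0$: otherwise the recursion would propagate $V_d = 0$ for all $d \ge c$, contradicting Lemma~\ref{L:surjective} applied with $Y = \{P\}$ (using $P \notin Z$, so that $\II_Z \cdot \OO_Y = \OO_Y$), which forces $\ev_P$ to be surjective once $d \ge c + e$.

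The heart of the argument is the strict-growth step: whenever $\dim V_d < e$, one must have $\dim V_{d+1} \ge \dim V_d + 1$. Supposing equality, $V_{d+1}$ would coincide with $\alpha_j V_d$, and the inclusions $\alpha_i V_d \subseteq V_{d+1} = \alpha_j V_d$ for every $i$ would translate, after dividing by $\alpha_j$, into $V_d$ being stable under multiplication by each ratio $\alpha_i / \alpha_j$. These ratios generate $\kappa(P)$ as an $\F_q$-algebra (they are the affine coordinates of $P$ in the chart $\{x_j = 1\}$), so $V_d$ would be a nonzero $\kappa(P)$-subspace of $\kappa(P)$, hence equal to $\kappa(P)$, contradicting $\dim V_d < e$. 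Iterating from $V_c \ne 0$ then yields $\dim V_d \ge \min(d - c + 1,\, e)$, and the lemma follows.

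I expect the strict-growth step to be the main obstacle: the delicate point is the passage from stability under the $\F_q$-subspace $A$ to stability under the full residue field $\kappa(P)$, which is exactly what forces the dimension to jump. Everything else (the recursion, the base case $V_c \ne 0$ via Lemma~\ref{L:surjective}, and the bookkeeping of $\min(d-c, e)$) is routine once that structural step is in place.
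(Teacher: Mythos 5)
Your proof is correct, and while its skeleton matches the paper's --- reduce to lower-bounding $\dim_{\F_q} \im(\ev_P)$, feed in $S_1 I_d = I_{d+1}$ for $d \ge c$, and show the image dimension grows by at least $1$ per step until it saturates at $e$ --- the justification of the strict-growth step is genuinely different. The paper dispatches the lemma in two sentences by citing the proof of Lemma~\ref{L:surjective}: there, for an \emph{arbitrary} finite subscheme $Y$, the chain $B_c \subseteq B_{c+1} \subseteq \cdots$ satisfies $S_1' B_d = B_{d+1}$, so if two consecutive terms coincide then all later ones do, and since the $B_d$ eventually equal all of $B = H^0(Y, \II_Z \cdot \OO_Y)$, a stalled chain must already have reached the maximum; specializing to $Y = P \notin Z$ gives the bound $\min(d-c,e)$. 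You instead exploit that $\kappa(P)$ is a field: a stalled image $V_d$ would be stable under multiplication by the ratios $\alpha_i/\alpha_j$, which generate $\kappa(P)$ as an $\F_q$-algebra, so $V_d$ would be a nonzero ideal of the field $\kappa(P)$ and hence all of it. Your mechanism is self-contained and pinpoints the structural reason the dimension must jump at a reduced point (and even gives the marginally stronger bound $\min(d-c+1,e)$ once you note $V_c \ne 0$), whereas the paper's stabilization argument is the one that generalizes verbatim to non-reduced $Y$ such as the scheme of Lemma~\ref{L:ideal sheaf square}, which is why the paper proves it once in that generality and merely points back to it here.
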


\begin{proof}
Equivalently, we must show that the image of $\phi_d$ 
in Lemma~\ref{L:surjective} for $Y=P$
has $\F_q$-dimension at least $\min(d-c,e)$.
The proof of Lemma~\ref{L:surjective}
shows that as $d$ runs through the integers $c,c+1,\dots$,
this dimension increases by at least $1$
until it reaches its maximum, which is $e$.
\end{proof}

\begin{lemma}[Singularities of high degree off $V$]
\label{L:high degree off V}
Define
\[
	\calQhigh_{X-V} :=\Union_{d \ge 0} \{\, f \in I_d: 
		\exists P \in (X-V)_{>\frac{d-c}{m+1}}
	\text{ such that $H_f \cap X$ is not smooth 
		of dimension $m-1$ at $P$} \,\}
\]
Then $\uppermu_Z(\calQhigh_{X-V})=0$.
\end{lemma}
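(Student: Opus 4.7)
The plan is to adapt the high-degree-off-$V$ argument of \cite{Poonen-bertini2004} to our relative setting, with Lemma~\ref{L:singular fraction 2} playing the role of the single-point vanishing estimate used there.

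First I would cover $X - V$ by finitely many locally closed subschemes $U_\alpha$, each contained in a standard affine chart of $\PP^n$, and arrange that on each $U_\alpha$ a fixed tuple of $m$ of the affine coordinates $t_1, \ldots, t_m$ restricts to a regular system of parameters at every closed point of $U_\alpha$. It then suffices to treat a single $U_\alpha$.

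Next, for each $d$, I would decompose a generic $f \in I_d$ using the $p$-th power expansion: after dehomogenizing, any polynomial can be written uniquely as $\sum_{\alpha :\, 0 \le \alpha_i < p} t^{\,\alpha}\, u_\alpha^{\,p}$ (using that $\F_q$ is perfect), and we restrict the $u_\alpha$'s to vary independently over appropriate subspaces of the dehomogenized analogues of $I_\bullet$. The key characteristic-$p$ identity $\partial(t^{\,\alpha} u_\alpha^{\,p})/\partial t_i = (\partial t^{\,\alpha}/\partial t_i)\, u_\alpha^{\,p}$ means that at any closed point $P \in U_\alpha$ of degree $e$, the $(m+1)$-fold singularity conditions ``$f(P)=0$ and $\partial f/\partial t_i(P) = 0$ for $i = 1,\ldots,m$'' decouple into conditions on the values $u_\alpha(P)$ for the finitely many $\alpha$ with $|\alpha| \le 1$. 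Applying Lemma~\ref{L:singular fraction 2} to each such $u_\alpha$ separately (using $P \notin Z$) yields a per-point upper bound of essentially $q^{-(m+1)e}$ on the fraction of $f \in I_d$ such that $H_f \cap X$ fails to be smooth of dimension $m-1$ at $P$.

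Finally, summing over $P \in U_\alpha$ of degree $e > (d-c)/(m+1)$ using the trivial estimate $\#\{P \in U_\alpha : \deg P = e\} = O(q^{em})$ for an $m$-dimensional variety, the total contribution becomes
\[
	\sum_{e > (d-c)/(m+1)} O(q^{em}) \cdot q^{-(m+1)e}
	= \sum_{e > (d-c)/(m+1)} O(q^{-e})
	= O\!\left(q^{-(d-c)/(m+1)}\right),
\]
which tends to $0$ as $d \to \infty$, and hence $\uppermu_Z(\calQhigh_{X-V}) = 0$. The main obstacle is carrying out the $p$-th power decomposition compatibly with the constraint $f \in I_d$: one must verify that the $\F_q$-dimensions of the subspaces parameterizing each $u_\alpha$ (subject to vanishing on the dehomogenization of $Z$) balance correctly, and that Lemma~\ref{L:singular fraction 2} applies to each piece with the full-rank bound $q^{-e}$ rather than a trivial one. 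The hypothesis $P \notin Z$ is essential for both: it ensures that the local vanishing conditions on the $u_\alpha$ at $P$ are not forced by membership in $I_\bullet$.
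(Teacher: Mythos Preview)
There is a genuine gap: your union bound over closed points of high degree diverges. Lemma~\ref{L:singular fraction 2} gives only $q^{-\min(d'-c,\,e)}$ for the fraction of $I_{d'}$ vanishing at a degree-$e$ point, and the ``full-rank'' bound $q^{-e}$ holds only when $e \le d'-c$. Your pieces $u_\alpha$ have degree at most $\lfloor d/p \rfloor$, so once $e$ exceeds roughly $d/p$ each per-piece bound freezes at about $q^{-d/p}$ and the combined per-point singularity bound freezes at about $q^{-(m+1)d/p}$, independent of $e$. But you are summing over \emph{all} closed points of $U_\alpha$ of degree $e>(d-c)/(m+1)$, with no upper cutoff on $e$; since the number of degree-$e$ points grows like $q^{em}$, the tail of your sum past $e\approx d/p$ is infinite. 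The displayed estimate $\sum_e O(q^{em})\,q^{-(m+1)e}=\sum_e O(q^{-e})$ is valid only in the range $e\lesssim d/p$, which is not the range at issue. This is exactly why the high-degree regime cannot be handled by extending the medium-degree union bound, and why the obstacle you flag in your last paragraph is fatal rather than merely technical.

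The paper's proof avoids any sum over closed points. It chooses local parameters $t_1,\ldots,t_m$ that are arranged to \emph{vanish on $Z$} (this also resolves your compatibility obstacle: with such $t_i$ one writes $f=f_0+g_1^p t_1+\cdots+g_m^p t_m+h^p$ with $f_0\in I_d'$, $h\in I_\eta'$, and each $g_i$ ranging freely over all of $S_\gamma'$, and $f$ is automatically uniform over $I_d'$). The argument is then a sequential dimension drop: conditioning on $f_0,g_1,\ldots,g_i$, B\'ezout bounds the number of irreducible components of $W_i=X\cap\{D_1 f=\cdots=D_i f=0\}$ by $O(d^i)$, and a random $g_{i+1}$ cuts the dimension of each with probability $1-o(1)$. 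Only after $W_m$ is shown to be a \emph{finite} set of size $O(d^m)$ is Lemma~\ref{L:singular fraction 2} invoked, in a union bound over just those $O(d^m)$ points.
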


\begin{proof}
It suffices to prove the lemma with $X$ replaced by each of the sets
in an open covering of $X-V$,
so we may assume $X$ is contained in $\Aff^n = \{x_0\ne 0\} \subseteq \PP^n$,
and that $V=\emptyset$.
Dehomogenize by setting $x_0=1$,
to identify $I_d \subseteq S_d$ 
with subspaces of $I_d' \subseteq S_d' \subseteq A:=\F_q[x_1,\ldots,x_n]$.

Given a closed point $x \in X$,
choose a system of local parameters $t_1,\dots,t_n \in A$
at $x$ on $\Aff^n$ such that $t_{m+1}=t_{m+2}=\dots=t_n=0$
defines $X$ locally at $x$.
Multiplying all the $t_i$ by an element of $A$ vanishing on $Z$
but nonvanishing at $x$,
we may assume in addition that all the $t_i$ vanish on $Z$.
Now $dt_1,\dots,dt_n$ are a $\OO_{\Aff^n,x}$-basis for 
the stalk $\Omega^1_{\Aff^n/\F_q,x}$.
Let $\del_1,\dots,\del_n$ be the dual basis 
of the stalk $\calT_{\Aff^n/\F_q,x}$
of the tangent sheaf.
Choose $s \in A$ with $s(x) \not=0$ to clear denominators
so that $D_i:=s \del_i$ gives a global derivation $A \rightarrow A$
for $i=1,\dots,n$.
Then there is a neighborhood $N_x$ of $x$ in $\Aff^n$
such that $N_x \cap \{t_{m+1}=t_{m+2}=\dots=t_n=0\} = N_x \intersect X$,
$\Omega^1_{N_x/\F_q}= \directsum_{i=1}^n \OO_{N_x} dt_i$,
and $s \in \OO(N_u)^*$.
We may cover $X$ with finitely many $N_x$,
so we may reduce to the case where $X \subseteq N_x$ for a single $x$.
For $f \in I_d' \isom I_d$, $H_f \cap X$ 
{\em fails} to be smooth of dimension $m-1$ at a point $P \in U$ 
if and only if $f(P)=(D_1f)(P)=\dots=(D_mf)(P)=0$.

Let $\tau=\max_i(\deg t_i)$, $\gamma=\lfloor (d-\tau)/p \rfloor$,
and $\eta = \lfloor d/p \rfloor$.
If $f_0 \in I_d'$, 
$g_1 \in S_{\gamma}'$, 
\dots, 
$g_m \in S_{\gamma}'$, 
and $h \in I_{\eta}'$
are selected uniformly and independently at random,
then the distribution of 
\[
	f:=f_0 + g_1^p t_1 + \dots + g_m^p t_m + h^p
\]
is uniform over $I_d'$, because of $f_0$.
We will bound the probability that an $f$ constructed in this way
has a point $P \in X_{>\frac{d-c}{m+1}}$
where $f(P)=(D_1f)(P)=\dots=(D_mf)(P)=0$.
We have $D_i f=(D_i f_0) + g_i^p s$ for $i=1,\dots,m$.
We will select $f_0,g_1,\dots,g_m,h$ one at a time.
For $0 \le i \le m$, define 
\[
	W_i:=X \cap \{D_1f=\dots=D_if=0\}.
\]

\bigskip
\noindent{\em Claim 1:} For $0 \le i \le m-1$,
conditioned on a choice of $f_0,g_1,\dots,g_i$
for which $\dim(W_i) \le m-i$,
the probability that $\dim(W_{i+1}) \le m-i-1$
is $1-o(1)$ as $d \rightarrow \infty$.
(The function of $d$ represented by the $o(1)$ depends on $X$ and the $D_i$.)

\bigskip
\noindent{\em Proof of Claim 1:}
This is completely analogous to the corresponding proof 
in \cite{Poonen-bertini2004}.

\bigskip
\noindent{\em Claim 2:} Conditioned on a choice of $f_0,g_1,\dots,g_m$
for which $W_m$ is finite,
$\Prob(H_f \cap W_m \cap X_{>\frac{d-c}{m+1}} = \emptyset) = 1-o(1)$
as $d \rightarrow \infty$.

\bigskip
\noindent{\em Proof of Claim 2:}
By B\'ezout's theorem as in~\cite{Fulton1984}*{p.~10},
we have $\#W_m = O(d^m)$.
For a given point $P \in W_m$,
the set $H^\bad$ of $h \in I_\eta'$ for which 
$H_f$ passes through $P$ is either $\emptyset$ 
or a coset of $\ker(\ev_P: I_\eta' \rightarrow \kappa(P))$,
where $\kappa(P)$ is the residue field of $P$,
and $\ev_P$ is the evaluation-at-$P$ map.
If moreover $\deg P> \frac{d-c}{m+1}$,
then Lemma~\ref{L:singular fraction 2} implies 
$\#H^\bad / \# I_\eta' \le q^{-\nu}$
where $\nu=\min \left(\eta,\frac{d-c}{m+1} \right)$.
Hence 
\[
	\Prob(H_f \cap W_m \cap X_{>\frac{d-c}{m+1}} \not= \emptyset) \le
		\# W_m q^{-\nu} = O(d^m q^{-\nu}) = o(1)
\]
as $d \rightarrow \infty$, since $\nu$ eventually grows linearly in $d$.
This proves Claim~2.

\bigskip
\noindent{\em End of proof:}
Choose $f \in I_d$ uniformly at random.
Claims~1 and ~2 show that
with probability $\prod_{i=0}^{m-1} (1-o(1)) \cdot (1-o(1)) = 1-o(1)$
as $d \rightarrow \infty$,
$\dim W_i = m-i$ for $i=0,1,\dots,m$
and $H_f \cap W_m \cap X_{>\frac{d-c}{m+1}} = \emptyset$.
But $H_f \cap W_m$ is the subvariety of $X$
cut out by the equations $f(P)=(D_1f)(P)=\dots=(D_mf)(P)=0$,
so $H_f \cap W_m \cap X_{>\frac{d-c}{m+1}}$ is exactly the set
of points of $H_f \cap X$ of degree $>\frac{d-c}{m+1}$
where $H_f \cap X$ is not smooth of dimension $m-1$.
Thus $\uppermu_Z(\calQhigh_{X-V})=0$.
\end{proof}

\begin{lemma}[Singularities of high degree on $V$]
\label{L:high degree on V}
Define
\[
	\calQhigh_V :=\Union_{d \ge 0} \{\, f \in I_d: 
		\exists P \in V_{>\frac{d-c}{m+1}}
	\text{ such that $H_f \cap X$ is not smooth 
		of dimension $m-1$ at $P$} \,\}.
\]
Then $\uppermu_Z(\calQhigh_V)=0$.
\end{lemma}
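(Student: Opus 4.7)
The plan is to follow the proof of Lemma~\ref{L:high degree off V} closely, with two simplifications and one new construction. The simplifications are that for $P\in V$ and $f\in I_d$ we have $f(P)=0$ automatically (so no $h^p$ term is needed) and $(D_jf)(P)=0$ automatically for the directions $D_j$ tangent to $V$ (so only $m-\ell$ derivative conditions survive). The new construction is a system of polynomials $u_1,\ldots,u_{m-\ell}\in I_Z\cap A$ that simultaneously vanish on $Z$ and play the role of ``local parameters on $X$ transverse to $V$''.

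After a linear change of coordinates it suffices to cover $V$ by finitely many open affines $N_x\cap V\subset\Aff^n:=\{x_0\ne 0\}$ and treat each piece separately. At a chosen base point $x\in V$, pick local parameters $t_1,\ldots,t_n\in A:=\F_q[x_1,\ldots,x_n]$ at $x$ on $\Aff^n$ so that $X$ is cut out locally by $t_{m+1}=\cdots=t_n=0$ and $V$ inside $X$ is cut out locally by $t_1=\cdots=t_{m-\ell}=0$, and set $D_i=s\partial_i$ as in the previous lemma. Since $V=Z\cap X$ scheme-theoretically, $\II_V\cdot\OO_{\Aff^n,x}=\II_Z\cdot\OO_{\Aff^n,x}+(t_{m+1},\ldots,t_n)$ in the local ring, so each $t_i$ with $i\le m-\ell$ can be written as $u_i+w_i$ with $u_i\in\II_Z\cdot\OO_{\Aff^n,x}$ and $w_i\in(t_{m+1},\ldots,t_n)$; clearing a common denominator yields polynomials $u_i\in I_Z\cap A$ with $(D_ju_i)(x)$ a nonzero scalar multiple of $\delta_{ij}$ for $1\le i,j\le m-\ell$. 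Shrink $N_x$ so that the $(m-\ell)\times(m-\ell)$ matrix $(D_ju_i)$ remains invertible throughout.

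Now write $f=f_0+g_1^pu_1+\cdots+g_{m-\ell}^pu_{m-\ell}$, with $f_0$ uniform in $I_d'$ and each $g_i$ uniform in $S_\gamma'$, where $\gamma=\lfloor(d-\tau)/p\rfloor$ and $\tau=\max_i\deg u_i$; each summand lies in $I_d'$ because $u_i\in I_Z$, and $f$ is uniformly distributed on $I_d'$ thanks to $f_0$. For $P\in V\cap N_x$ the condition for $H_f\cap X$ to fail smoothness reduces to $(D_1f)(P)=\cdots=(D_{m-\ell}f)(P)=0$, so define $W_i:=V\cap N_x\cap\{D_1f=\cdots=D_if=0\}$. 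The analogue of Claim~1 from the proof of Lemma~\ref{L:high degree off V} carries over with $u_i$ replacing $t_i$: conditional on $\dim W_i\le\ell-i$, we have $\dim W_{i+1}\le\ell-i-1$ with probability $1-o(1)$, for $0\le i\le\ell-1$. Iterating, $\dim W_\ell\le 0$ with probability $1-o(1)$, and then B\'ezout bounds $\#W_\ell=O(d^\ell)$.

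The hypothesis $m>2\ell$ gives $m-\ell\ge\ell+1$, so $g_{\ell+1}$ and $D_{\ell+1}$ are still available for a Claim~2 step. Conditional on $f_0,g_1,\ldots,g_\ell$ making $W_\ell$ zero-dimensional, we have $D_{\ell+1}f(P)=\mathrm{const}+g_{\ell+1}^p(P)\cdot(D_{\ell+1}u_{\ell+1})(P)$ as $g_{\ell+1}$ varies, with $(D_{\ell+1}u_{\ell+1})(P)$ a unit on $N_x$. Thus for each fixed $P\in W_\ell$ with $\deg P>(d-c)/(m+1)$, the probability that $P\in W_{\ell+1}$ is at most the probability that $g_{\ell+1}(P)$ takes one specified value in $\kappa(P)$, which by the surjectivity argument in the proof of Lemma~\ref{L:surjective} (applied with $Z=\emptyset$ to the unconstrained space $S_\gamma'$) is $\le q^{-\nu}$ with $\nu=\Omega(d)$. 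A union bound over the $O(d^\ell)$ candidates gives $o(1)$, and since $W_{m-\ell}\subseteq W_{\ell+1}$ we conclude $\uppermu_Z(\calQhigh_V)=0$. The main obstacle is the construction of the $u_i$ in the second paragraph, where the scheme-theoretic hypothesis $V=Z\cap X$ is used essentially; everything else is a bookkeeping adaptation of the previous lemma to the fact that the automatic vanishing of $f$ on $V$ replaces one $h^p$-style argument with a second $g^p$-style argument.
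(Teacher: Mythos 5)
Your proof follows the same route as the paper's: reduce to an affine chart, choose local parameters $t_1,\dots,t_n$ adapted to $X$ and $V$, replace the parameters transverse to $V$ by elements of the ideal of $Z$ so that the perturbations $g_i^p(\cdot)$ stay inside $I_d'$, run the dimension-decreasing induction on the $W_i$ down to $i=\ell$, and finish with a B\'ezout count plus a $q^{-\nu}$ union bound using the $(\ell+1)$-st perturbation, which exists because $m>2\ell$. Your explicit construction of the $u_i\in I_Z$ via the local identity $\II_V=\II_Z+\II_X$ is exactly the paper's step ``adjust $t_1,\dots,t_{m-\ell}$ to vanish not only on $V$ but also on $Z$,'' made concrete.

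One justification, however, needs repair. With $f=f_0+\sum_{i=1}^{m-\ell}g_i^pu_i$ you get $D_jf=D_jf_0+\sum_i g_i^p\,D_ju_i$, so a priori each $W_j$ depends on \emph{all} of $g_1,\dots,g_{m-\ell}$; this breaks the one-at-a-time conditioning in your Claim~1 analogue and the ``$\mathrm{const}+g_{\ell+1}^p\cdot(\text{unit})$'' form in your Claim~2 analogue. Invertibility of the matrix $(D_ju_i)$ on $N_x$, which is what you invoke, does not fix this: what the sequential argument needs is that the matrix is \emph{diagonal} at the points that matter. Fortunately your construction delivers this, though you do not say so: $u_i$ differs from (a unit times) $t_i$ by an element of the ideal $(t_{m+1},\dots,t_n)$, and for $j\le m$ the derivation $D_j$ maps that ideal into itself (since $D_jt_k=s\delta_{jk}=0$ for $k\ge m+1$), hence into functions vanishing on $X\cap N_x$; therefore $(D_ju_i)(P)=(\text{unit})\cdot\delta_{ij}$ for all $P\in V\cap N_x$, so the restriction of $D_jf$ to $V$ depends only on $f_0$ and $g_j$, and the induction goes through. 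The paper sidesteps this wrinkle by adjusting the $t_i$ themselves and taking the dual basis $\del_i$ \emph{after} the adjustment, so that $D_jt_i=s\delta_{ij}$ holds identically, and by introducing only the $\ell+1$ perturbation terms it actually uses. With that one verification added, your proof is correct and is essentially the paper's.
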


\begin{proof}
As before, we may assume $X \subseteq \Aff^n$ and we may dehomogenize.
Given a closed point $x \in X$,
choose a system of local parameters $t_1,\dots,t_n \in A$
at $x$ on $\Aff^n$ such that $t_{m+1}=t_{m+2}=\cdots=t_n=0$
defines $X$ locally at $x$,
and $t_1=t_2=\cdots=t_{m-\ell} = t_{m+1}=t_{m+2}=\cdots=t_n=0$
defines $V$ locally at $x$.
If $\mm_w$ is the ideal sheaf of $w$ on $\PP^n$,
then $\II_Z \to \frac{\mm_w}{\mm_w^2}$ is surjective,
so we may adjust $t_1,\ldots,t_{m-\ell}$ to assume that
they vanish not only on $V$ but also on $Z$.

Define $\del_i$ and $D_i$ as in the proof of Lemma~\ref{L:high degree off V}.
Then there is a neighborhood $N_x$ of $x$ in $\Aff^n$
such that $N_x \cap \{t_{m+1}=t_{m+2}=\dots=t_n=0\} = N_x \intersect X$,
$\Omega^1_{N_x/\F_q}= \directsum_{i=1}^n \OO_{N_x} dt_i$,
and $s \in \OO(N_u)^*$.
Again we may assume $X \subseteq N_x$ for a single $x$.
For $f \in I_d' \isom I_d$, $H_f \cap X$ 
{\em fails} to be smooth of dimension $m-1$ at a point $P \in V$ 
if and only if $f(P)=(D_1f)(P)=\dots=(D_mf)(P)=0$.

Again let $\tau=\max_i(\deg t_i)$, $\gamma=\lfloor (d-\tau)/p \rfloor$,
and $\eta = \lfloor d/p \rfloor$.
If $f_0 \in I_d'$, 
$g_1 \in S_{\gamma}'$, 
\dots, 
$g_{\ell+1} \in S_{\gamma}'$, 
are chosen uniformly at random,
then 
\[
	f:=f_0 + g_1^p t_1 + \dots + g_{\ell+1}^p t_{\ell+1}
\]
is a random element of $I_d'$, since $\ell+1 \le m-\ell$.

For $i=0,\ldots,\ell+1$, the subscheme
\[
	W_i:=V \cap \{D_1f=\dots=D_if=0\}
\]
depends only on the choices of $f_0,g_1,\ldots,g_i$.
The same argument as in the previous proof shows that 
for $i=0,\dots,\ell$, we have 
\[
	\Prob(\dim W_i \le \ell-i) = 1-o(1)
\]
as $d \to \infty$.
In particular, $W_\ell$ is finite with probability $1-o(1)$.

To prove that $\uppermu_Z(\calQhigh_V)=0$,
it remains to prove that 
conditioned on choices of $f_0,g_1,\ldots,g_\ell$
making $\dim W_\ell$ finite, 
\[
	\Prob(W_{\ell+1} \intersect V_{>\frac{d-c}{m+1}} = \emptyset) = 1-o(1).
\]
By B\'ezout's theorem, $\# W_\ell = O(d^\ell)$.
The set $H^\bad$ of choices of $g_{\ell+1}$ making 
$D_{\ell+1}f$ vanish at a given point $P \in W_\ell$
is either empty or a coset of $\ker(\ev_P: S_\gamma' \rightarrow \kappa(P))$.
Lemma~2.5 of \cite{Poonen-bertini2004} implies
that the size of this kernel (or its coset) as a fraction of $\#S_\gamma'$
is at most $q^{-\nu}$
where $\nu:=\min \left(\gamma,\frac{d-c}{m+1} \right)$.
Since $\# W_\ell q^{\nu} = o(1)$ as $d \to \infty$,
we are done.
\end{proof}

\section{Conclusion}

\begin{proof}[Proof of Theorem~\ref{T:main}(i)]
We have
\[
	\calP \subseteq \calP_r \subseteq \calP \cup \calQmedium_r \cup \calQhigh_{X-V} \cup \calQhigh_V,
\]
so
$\uppermu_Z(\calP)$ and $\lowermu_Z(\calP)$
each differ from $\mu_Z(\calP_r)$ by at most
$\uppermu_Z(\calQmedium_r) + \uppermu_Z(\calQhigh_{X-V})+\uppermu_Z(\calQhigh_{V})$.
Applying Corollary~\ref{C:low degree limit}
and Lemmas \ref{L:medium degree}, \ref{L:high degree off V}, 
and~\ref{L:high degree on V},
we obtain
\[
	\mu_Z(\calP)=\lim_{r \to \infty} \mu_Z(\calP_r) = \frac{\zeta_V(m+1)}{\zeta_V(m-\ell) \; \zeta_X(m+1) }.
\]
\end{proof}

\section*{Acknowledgements} 

I thank Shuji Saito for asking the question answered by this paper,
and for pointing out \cite{Kleiman-Altman1979}.

\begin{bibdiv}
\begin{biblist}


\bib{Bloch1970}{book}{
  author={Bloch, Spencer},
  date={1970},
  note={Ph.D.\ thesis, Columbia University},
}

\bib{Fulton1984}{book}{
  author={Fulton, William},
  title={Introduction to intersection theory in algebraic geometry},
  series={CBMS Regional Conference Series in Mathematics},
  volume={54},
  publisher={Published for the Conference Board of the Mathematical Sciences, Washington, DC},
  date={1984},
  pages={v+82},
  isbn={0-8218-0704-8},
  review={MR735435 (85j:14008)},
}

\bib{Gabber2001}{article}{
  author={Gabber, O.},
  title={On space filling curves and Albanese varieties},
  journal={Geom. Funct. Anal.},
  volume={11},
  date={2001},
  number={6},
  pages={1192\ndash 1200},
  issn={1016-443X},
  review={MR1878318 (2003g:14034)},
}

\bib{Kleiman-Altman1979}{article}{
  author={Kleiman, Steven L.},
  author={Altman, Allen B.},
  title={Bertini theorems for hypersurface sections containing a subscheme},
  journal={Comm. Algebra},
  volume={7},
  date={1979},
  number={8},
  pages={775\ndash 790},
  issn={0092-7872},
  review={MR529493 (81i:14007)},
}

\bib{Poonen-bertini2004}{article}{
  author={Poonen, Bjorn},
  title={Bertini theorems over finite fields},
  journal={Ann. of Math. (2)},
  volume={160},
  date={2004},
  number={3},
  pages={1099--1127},
  issn={0003-486X},
  review={\MR {2144974 (2006a:14035)}},
}

\bib{Saito-Sato2007preprint}{article}{
  author={Saito, Shuji},
  author={Sato, Kanetomo},
  title={Finiteness theorem on zero-cycles over $p$-adic fields},
  date={2007-04-11},
  note={{\tt arXiv:math.AG/0605165}},
}

\bib{Weil1949}{article}{
  author={Weil, Andr{\'e}},
  title={Numbers of solutions of equations in finite fields},
  journal={Bull. Amer. Math. Soc.},
  volume={55},
  date={1949},
  pages={497\ndash 508},
  review={MR0029393 (10,592e)},
}

\end{biblist}
\end{bibdiv}

\end{document}